\providecommand{\U}[1]{\protect\rule{.1in}{.1in}}
\newtheorem{theorem}{Theorem}[section]
\newtheorem{corollary}[theorem]{Corollary}
\newtheorem{example}[theorem]{Example}
\newtheorem{proposition}[theorem]{Proposition}
\newenvironment{proof}[1][Proof]{\textbf{#1.} }{\ \rule{0.5em}{0.5em}}
\def\N{\mathbb{N}}
\def\Z{\mathbb{Z}}
\newcommand{\vs}[1]{\langle #1 \rangle}
\begin{document}
\title{On the number of numerical semigroups $\vs{a,b}$ of prime power genus}
\date{\today}
\author{Shalom Eliahou\footnote{eliahou@lmpa.univ-littoral.fr.}\; and Jorge Ram\'\i{}rez Alfons\'{\i}n\footnote{jramirez@math.univ-montp2.fr}}
\maketitle

\begin{abstract}
Given $g\ge 1$, the number $n(g)$ of numerical semigroups $S \subset \N$ of genus $|\N \setminus S|$ equal to $g$ is the subject of challenging conjectures of Bras-Amor\'os. In this paper, we focus on the counting function $n(g,2)$ of \textit{two-generator} numerical semigroups of genus $g$, which is known to also count certain special factorizations of $2g$. Further focusing on the case $g=p^k$ for any odd prime $p$ and $k \ge 1$, we show that $n(p^k,2)$ only depends on the class of $p$ modulo a certain explicit modulus $M(k)$. The main ingredient is a reduction of $\gcd(p^\alpha+1, 2p^\beta+1)$ to a simpler form, using the continued fraction of $\alpha/\beta$. We treat the case $k=9$ in detail and show explicitly how $n(p^9,2)$ depends on the class of $p$ mod $M(9)=3 \cdot 5 \cdot 11 \cdot 17 \cdot 43 \cdot 257$.
\medskip

\noindent
\textbf{Keywords.} Gap number; Sylvester's theorem; Special factorizations; Euclidean algorithm; Continued fractions; RSA.
\end{abstract}

\section{Introduction}

A \textit{numerical semigroup} is a subset $S \subset \N$ containing 0, stable under addition and with finite complement in $\N$. The cardinality of $\N \setminus S$ is then called the \textit{gap number} or the \textit{genus} of $S$. It is well known that, given $g \in \N$, there are only finitely many numerical semigroups of genus $g$. Yet the question of \textit{counting them} seems to be a very hard problem, analogous to the one of counting numerical semigroups by Frobenius number. See \cite{Bras-Amoros08, Bras-Amoros09} for some nice conjectures about it. The problem becomes more tractable when restricted to semigroups $S = \vs{a,b} =\N a +\N b$ with two generators. So, let us denote by $n(g,2)$ the number of numerical semigroups $S= \vs{a,b}$ of genus $g$. On the one hand, determining $n(g,2)$ is linked to hard factorization problems, like factoring Fermat and Mersenne numbers \cite{Eliahou Ramirez}. On the other hand, the value of $n(g,2)$ is known for all $g=2^k$ with $k \ge 1$, and for all $g=p^k$ with $p$ an odd prime and $k \le 8$. Indeed, exact formulas are provided in \cite{Eliahou Ramirez}, showing in particular that $n(p^k,2)$  for $k=$ 1, 2, 3, 4, 5, 6, 7 and 8 only depends on the class of $p$ modulo 3, 1, 15, 7, 255, 31, 36465 and 27559, respectively. See also Section~\ref{k <= 8}, where these formulas are given in a new form.

Our purpose in this paper is to extend our understanding of $n(p^k, 2)$ to arbitrary exponents $k \in \N$. Giving exact formulas in all cases is out of reach since, for instance, a formula for $n(p^{4097},2)$ would require the still unknown factorization of the 12th Fermat number $2^{2^{12}}+1$. However, what can and will be done here  is to show that, \textit{for all $k \ge 1$, the value of $n(p^k,2)$ only depends on the class of $p$ modulo some explicit modulus $M(k)$}. 

This result is formally stated and proved in Section~\ref{main result}. Here is how $M(k)$ is  defined:
$$
M(k)\, =\, 
\textrm{rad}(\prod\limits_{i=1}^{k} \left(2^{i/\gcd(i,k)}-(-1)^{k/\gcd(i,k)}\right)),
$$
where rad($n$) denotes the product of the distinct prime factors of $n$, i.e. the largest square-free divisor of $n$. We start by recalling in Section~\ref{factorizations} that $n(g,2)$ can be identified with the counting function of certain special factorizations of $2g$. In Section~\ref{reduction}, we reduce $\gcd(p^\alpha+1, 2p^\beta+1)$ for $\alpha,\beta \in \N$ to the simpler form $$\gcd(p^{\gcd(\alpha,\beta)} \pm 2^\rho,c)$$ where $\rho,c \in \Z$ only depend on $\alpha,\beta$ and not on $p$. This reduction uses the continued fraction of $\alpha/\beta$ and directly leads to our main result in Section~\ref{main result}. In Section~\ref{X_{a,q}}, we introduce basic binary functions $X_{a,q}$ which will serve as building blocks in our formulas. The case $k=9$ is treated in detail in Section~\ref{k=9}, where we give an explicit formula for $n(p^9,2)$ depending on the class of $p$ mod $M(9)=3 \cdot 5 \cdot 11 \cdot 17 \cdot 43 \cdot 257$. We also provide a formula in the case $k=10$ with somewhat less details. Finally, in the last section we give and prove new formulas for $n(p^k,2)$ with $k \le 8$ in terms of the $X_{a,q}$. 

Background information on numerical semigroups can be found in the books \cite{Ramirez Alfonsin, Rosales Garcia-Sanchez}.

\section{Special factorizations of $2g$}\label{factorizations}

We first recall from \cite{Eliahou Ramirez} that $n(g,2)$ \textit{can be identified with the counting number of factorizations $uv$ of $2g$ in $\N$ satisfying $\gcd(u+1,v+1)=1$}. In formula: 
\begin{equation}\label{special}
n(g,2) \,=\,  \#\{\{u,v\} \subset \N \mid uv=2g, \, \gcd(u+1,v+1) \, = \, 1\}.
\end{equation}
This follows from the classical theorem of Sylvester \cite{Sylvester} stating that whenever $\gcd(a,b)=1$, the genus $g$ of the numerical semigroup $S = \vs{a,b}$ is given by
$$
g \,=\, \frac{(a-1)(b-1)}2.
$$
For $g=p^k$ with $p$ an odd prime, an immediate consequence of (\ref{special}) is the following formula.
\begin{proposition}\label{2g} For any odd prime $p$ and exponent $k \ge 1$, we have
\begin{equation*}\label{eq p^k}
n(p^k,2) \, =\,  \#\{0 \le i \le k \mid \gcd(p^i+1,2p^{k-i}+1)=1\}. \, {\ \rule{0.5em}{0.5em}}
\end{equation*} 
\end{proposition}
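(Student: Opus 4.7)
The plan is to start from the characterization (\ref{special}), specialize $g = p^k$, and classify the divisor factorizations of $2p^k$ explicitly.

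First I would observe that, because $p$ is an odd prime, the divisors of $2p^k$ are precisely the numbers $p^j$ and $2p^j$ for $0 \le j \le k$. Any factorization $2p^k = uv$ therefore has exactly one even and one odd factor, since $2p^k$ contains a single factor of $2$. Hence each unordered pair $\{u,v\}$ with $uv = 2p^k$ is uniquely of the form $\{2p^j, p^{k-j}\}$ for a unique $j \in \{0,1,\dots,k\}$; the two components are automatically distinct (different parities), so no collapsing of the unordered pair occurs and no double-counting arises. This gives a bijection between the index set $\{0,\dots,k\}$ and the set of unordered factor pairs of $2p^k$.

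Next I would translate the coprimality constraint from (\ref{special}) through this bijection. For the pair $\{u,v\} = \{2p^j, p^{k-j}\}$, the condition $\gcd(u+1,v+1) = 1$ becomes $\gcd(2p^j + 1, p^{k-j} + 1) = 1$. Reindexing by $i := k-j$ (which is also a bijection of $\{0,\dots,k\}$ with itself) rewrites this as $\gcd(p^i+1,\, 2p^{k-i}+1) = 1$. Substituting into (\ref{special}) with $g=p^k$ yields the claimed formula.

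There is no genuine obstacle: the whole argument is a straightforward enumeration of divisors combined with the input formula (\ref{special}). The only point that deserves a sentence of care is the unordered-pair issue, which is immediately resolved by the parity observation that $2p^j$ and $p^{k-j}$ are never equal. Accordingly, I would present the proof in a compact few lines, with emphasis on the divisor structure of $2p^k$ and the bijection $j \leftrightarrow i = k-j$.
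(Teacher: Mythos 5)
Your proposal is correct and matches the argument the paper intends: the paper labels the proposition an ``immediate consequence'' of (\ref{special}) and omits the details, which are exactly your enumeration of the divisor pairs $\{2p^j, p^{k-j}\}$ of $2p^k$ and the reindexing $i=k-j$. The parity remark ruling out a collapsed unordered pair is a sensible bit of care, and nothing further is needed.
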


Thus, in order to understand the behavior of $n(p^k,2)$, we need to gain some control on 
$$
\gcd(p^\alpha+1, 2p^\beta+1) 
$$
for $\alpha, \beta \in \N$, and hopefully find ways to determine when this greatest common divisor equals 1. This is addressed in the next section.

\section{On $\gcd(p^\alpha+1,2p^\beta+1)$}\label{reduction}
Here is the key technical tool which will lead to our main result in Section~\ref{main result}. Given $\alpha, \beta \in \N$, we shall reduce the greatest common divisor
$$
\gcd(p^\alpha+1,2p^\beta+1)
$$
to the simpler form
$$
\gcd(p^\delta\pm 2^\rho, c),
$$
where $\delta=\gcd(\alpha,\beta)$ and where $\rho, c \in \Z$ only depend on $\alpha, \beta$ and not on $p$. For this purpose, it is more convenient to work in the ring $\Z[2^{-1}]$ where 2 is made invertible. Moreover, one may effortlessly replace $\Z[2^{-1}]$ by any unique factorization domain $A$,  and 2 by any invertible element $u$ in $A$. Of course then, the gcd is only defined up to invertible elements of $A$. The proof in this more general context remains practically the same. 

\begin{proposition}\label{prop} Let $A$ be a unique factorization domain and let $x,u \in A$ with $u$ invertible. Let $\alpha, \beta \in \N$ and set  $\delta=\gcd(\alpha,\beta)$. Then there exists $\rho \in \Z$ such that
$$
\gcd(x^{\alpha}+1,u x^{\beta}+1)\,=\,\gcd(x^{\delta}\pm u^\rho, u^{\alpha/\delta} - (-1)^{(\alpha-\beta)/\delta}).
$$
\end{proposition}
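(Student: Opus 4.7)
The plan is to work in the quotient ring $A/d$ with $d = \gcd(x^\alpha+1,\, ux^\beta+1)$, and to extract both factors of the target gcd from computations performed there. Since $d \mid x^\alpha + 1$, the element $x$ is coprime to $d$, hence a unit modulo $d$; and $u$ is a unit in $A$ by hypothesis. The defining relations read $x^\alpha \equiv -1$ and $u x^\beta \equiv -1 \pmod d$, equivalently $u \equiv -x^{-\beta} \pmod d$.

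\emph{The second factor, by direct substitution.} Raising $u \equiv -x^{-\beta}$ to the power $\alpha/\delta$ and using $x^\alpha \equiv -1$,
\[
u^{\alpha/\delta} \,\equiv\, (-1)^{\alpha/\delta}(x^\alpha)^{-\beta/\delta} \,\equiv\, (-1)^{(\alpha+\beta)/\delta} \,=\, (-1)^{(\alpha-\beta)/\delta} \pmod d,
\]
the last equality because $(\alpha+\beta)/\delta$ and $(\alpha-\beta)/\delta$ differ by the even integer $2\beta/\delta$. Hence $d$ divides $u^{\alpha/\delta} - (-1)^{(\alpha-\beta)/\delta}$.

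\emph{The first factor, via the Euclidean algorithm on exponents.} Run Euclid on $(\alpha,\beta)$, carrying along at each stage two relations of the shape $x^a \equiv \epsilon_1 u^{\rho_1}$ and $x^b \equiv \epsilon_2 u^{\rho_2} \pmod d$ with $\epsilon_i \in \{\pm 1\}$ and $\rho_i \in \Z$. The initial two, $x^\alpha \equiv (-1)u^0$ and $x^\beta \equiv (-1)u^{-1}$, are of this shape. A division step $a = qb + r$ with $0 \le r < b$ produces $x^r = x^a(x^b)^{-q} \equiv \epsilon_1\epsilon_2^{-q} u^{\rho_1 - q\rho_2} \pmod d$, again of the same shape but with strictly smaller exponent. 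When the algorithm terminates at exponents $(\delta, 0)$, it has produced $\epsilon \in \{\pm 1\}$ and $\rho \in \Z$ with $x^\delta \equiv \epsilon u^\rho \pmod d$, so $d$ divides $x^\delta - \epsilon u^\rho$, an element of the form $x^\delta \pm u^\rho$. Combined with the previous step, $d$ divides the right-hand gcd.

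\emph{Converse, and main obstacle.} Let $d'$ denote the right-hand gcd. Modulo $d'$, $x^\delta \equiv \sigma u^\rho$ for the chosen sign $\sigma \in \{\pm 1\}$ and $u^{\alpha/\delta} \equiv (-1)^{(\alpha-\beta)/\delta}$; hence
\[
x^\alpha = (x^\delta)^{\alpha/\delta} \,\equiv\, \sigma^{\alpha/\delta}(u^{\alpha/\delta})^\rho \,\equiv\, \sigma^{\alpha/\delta}(-1)^{\rho(\alpha-\beta)/\delta} \pmod{d'},
\]
a sign $S \in \{\pm 1\} \subset A$. The identical calculation carried out mod $d$ — where additionally $x^\alpha \equiv -1$ — forces $S = -1$ in $A/d$, and hence in $A$ itself (the degenerate case $d \mid 2$ being absorbed in the gcd's unit ambiguity). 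Therefore $d' \mid x^\alpha + 1$, and a parallel computation gives $d' \mid u x^\beta + 1$, so $d' \mid d$. The main delicacy is precisely this sign bookkeeping: one must ensure that the specific pair $(\epsilon, \rho)$ produced by the Euclidean reduction is consistent with the sign identity $\sigma^{\alpha/\delta}(-1)^{\rho(\alpha-\beta)/\delta} = -1$ required for the converse. A cleaner variant would prove a slightly more general statement by induction on the number of reduction steps, tracking the evolution of $(\epsilon_i, \rho_i)$ explicitly.
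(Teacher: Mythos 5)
Your forward direction is fine, and your overall strategy (Euclidean algorithm on the exponents, tracking relations $x^{r}\equiv \epsilon u^{\rho}$) is the same as the paper's. But the converse direction has a genuine gap, and it is exactly the one you flag in your last sentence. The sign $S=\sigma^{\alpha/\delta}(-1)^{\rho(\alpha-\beta)/\delta}$ is a purely combinatorial quantity: $(\sigma,\rho)$ is produced by the formal recursion $\epsilon_{i+2}=\epsilon_i\epsilon_{i+1}^{-q_i}$, $\rho_{i+2}=\rho_i-q_i\rho_{i+1}$ and does not depend on $d$ at all. Your only mechanism for pinning $S$ down is to compare, modulo $d$, the congruence $x^{\alpha}\equiv S$ with $x^{\alpha}\equiv -1$; this yields only $d\mid S+1$. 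That is vacuous precisely when $d$ is a unit --- which is the case of interest for the whole paper, since one wants to detect when $\gcd(p^i+1,2p^{k-i}+1)=1$ --- and more generally inconclusive whenever $d\mid 2$ in a UFD where $2$ is not invertible (the hypotheses do not make $2$ a unit; only $u$ is assumed invertible). In those cases you cannot conclude $d'\mid x^{\alpha}+1$, so $d'\mid d$ is not established and the proof is incomplete. The fact is that $S=-1$ always, but proving it requires actually doing the sign bookkeeping you defer.

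The paper closes this gap in two ways at once. First, it never splits the argument into two divisibility directions: it proves the exact equality $\gcd(f_i,f_{i+1})=\gcd(f_{i+1},f_{i+2})$ at each Euclidean step, where $f_j=x^{r_j}-(-1)^{s_j}u^{t_j}$, by exhibiting $f_i\equiv(\hbox{unit})\cdot f_{i+2}\bmod f_{i+1}$; chaining these gives $\gcd(f_0,f_1)=\gcd(f_n,f_{n+1})$ with no converse step needed. Second, it computes the terminal exponents explicitly: writing the product of the continued-fraction matrices $\left(\begin{smallmatrix}a_i&1\\1&0\end{smallmatrix}\right)$ as $A$ and using $\det A=(-1)^n$, it finds $t_{n+1}=(-1)^{n+1}\alpha/\delta$ and $s_{n+1}=(-1)^n(\alpha-\beta)/\delta$, which identifies $f_{n+1}$ with $u^{\alpha/\delta}-(-1)^{(\alpha-\beta)/\delta}$ up to a unit. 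If you want to salvage your two-sided argument, you must carry out the analogous computation (e.g.\ show via the determinant identity $s_nt_{n+1}-t_ns_{n+1}=(-1)^{n+1}$ that $s_n\alpha/\delta+t_n(\alpha-\beta)/\delta$ is odd, whence $S=-1$); as written, the claim ``forces $S=-1$ in $A$ itself'' is not justified.
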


The proof is based on a careful study of the successive steps in the Euclidean algorithm for computing gcd's. 

\medskip
\noindent
\begin{proof} First note that, since $u$ is invertible, we have
$$
\gcd(x^{\alpha}+1,u x^{\beta}+1)\,=\,\gcd(x^{\alpha}+1,x^{\beta}+u^{-1}).
$$
Set $r_0=\alpha,r_1=\beta$. Consider the Euclidean algorithm to compute $\gcd(r_0,r_1)$: 
\begin{equation}\label{r_i}
r_i=a_i r_{i+1}+r_{i+2}
\end{equation}
for all $0 \le i \le n-1$, where $0 \le r_{i+1} < r_i$ for all $1 \le i \le n-1$, $r_{n+1}=0$, $r_n=\gcd(r_0,r_1)$. Of course, the $a_i$'s are the \textit{partial quotients} of the continued fraction $[a_0,a_1,\ldots,a_n]$ of $\alpha/\beta$. We have
\begin{equation}\label{a_i}
\begin{pmatrix} r_i \\ r_{i+1}
\end{pmatrix}
= 
\begin{pmatrix} a_i & 1 \\ 1 & 0
\end{pmatrix}
\begin{pmatrix} r_{i+1} \\ r_{i+2}
\end{pmatrix}
\end{equation}
for all $0 \le i \le n-1$. Set $(s_0,s_1)=(1,1)$ and $(t_0,t_1)=(0,-1)$. Then we have
$$
\begin{matrix}
x^{r_0}+1 & = & x^{r_0}-(-1)^{s_0}u^{t_0},\\
x^{r_1}+u^{-1} & = & x^{r_1}-(-1)^{s_1}u^{t_1}.
\end{matrix}
$$
For $i=0, \ldots, n-1$, recursively define
\begin{eqnarray*}
s_{i+2} & = &s_i - a_i s_{i+1},\\
t_{i+2} & = &t_i - a_i t_{i+1}.
\end{eqnarray*}
Then as in \eqref{a_i}, we have 
\begin{eqnarray}
\begin{pmatrix}\label{s_i} s_i \\ s_{i+1}
\end{pmatrix}
& = & 
\begin{pmatrix} a_i & 1 \\ 1 & 0
\end{pmatrix}
\begin{pmatrix} s_{i+1} \\ s_{i+2}
\end{pmatrix}, \\
\begin{pmatrix} t_i \\ t_{i+1}
\end{pmatrix}
& = & 
\begin{pmatrix}\label{t_i} a_i & 1 \\ 1 & 0
\end{pmatrix}
\begin{pmatrix} t_{i+1} \\ t_{i+2}
\end{pmatrix}
\end{eqnarray}
for all $0 \le i \le n-1$.
Finally, for all $0 \le j \le n+1$, set
$$
f_j=x^{r_j}-(-1)^{s_j}u^{t_j}.
$$
Note that $f_0=x^{r_0}+1$, $f_1=x^{r_1}+u^{-1}$, and 
\begin{equation}\label{f_{n+1}}
f_{n+1}=1-(-1)^{s_{n+1}}u^{t_{n+1}}
\end{equation}
since $r_{n+1}=0$.

\medskip
\noindent
\textbf{Claim.} For all $0 \le i \le n-1$, we have
\begin{equation}\label{claim}
\gcd(f_{i},f_{i+1}) = \gcd(f_{i+1},f_{i+2}).
\end{equation}

\noindent
Indeed, it follows from \eqref{r_i} that
\begin{eqnarray*}
f_i & = & x^{r_i}-(-1)^{s_i}u^{t_i}\\
    & = & (x^{r_{i+1}})^{a_i}x^{r_{i+2}}-(-1)^{s_i}u^{t_i}.
\end{eqnarray*}
Now, since
$$
x^{r_{i+1}} \equiv (-1)^{s_{i+1}}u^{t_{i+1}} \bmod f_{i+1},
$$
we find
\begin{eqnarray*}
f_i & \equiv & ((-1)^{s_{i+1}}u^{t_{i+1}})^{a_i}x^{r_{i+2}}-(-1)^{s_i}u^{t_i} \bmod f_{i+1} \\
    & \equiv & (-1)^{a_i s_{i+1}}u^{a_i t_{i+1}}x^{r_{i+2}}-(-1)^{s_i}u^{t_i} \bmod f_{i+1}.
\end{eqnarray*}
Thus,
\begin{eqnarray*}
(-1)^{-a_i s_{i+1}}u^{-a_i t_{i+1}}f_i & \equiv & x^{r_{i+2}}-(-1)^{s_i-a_i s_{i+1}}u^{t_i-a_i t_{i+1}} \bmod f_{i+1}\\
& \equiv & x^{r_{i+2}}-(-1)^{s_{i+2}}u^{t_{i+2}} \bmod f_{i+1} \\
& \equiv & f_{i+2}  \bmod f_{i+1}.
\end{eqnarray*}
Consequently, we have $f_i \equiv (-1)^{a_i s_{i+1}}u^{a_i t_{i+1}}f_{i+2} \bmod f_{i+1}$. Using the equality 
$$\gcd(f,g)=\gcd(g,h)$$ 
whenever $f \equiv h \bmod g$ for elements in $A$, we conclude that
\begin{eqnarray*}
\gcd(f_i,f_{i+1}) & = & \gcd(f_{i+1},(-1)^{a_i s_{i+1}}u^{a_i t_{i+1}}f_{i+2}) \\
                  & = & \gcd(f_{i+1},f_{i+2})
\end{eqnarray*}
since $(-1)^{a_i s_{i+1}}u^{a_i t_{i+1}}$ is a unit in $A$. This proves the claim. 

\bigskip

As a first consequence, we get
\begin{equation}\label{reduc}
\gcd(f_0,f_1)=\gcd(f_n,f_{n+1}).
\end{equation}
Denote now
$$
A = \prod_{i=0}^{n-1} \begin{pmatrix} a_i & 1 \\ 1 & 0 \end{pmatrix} = \begin{pmatrix} \alpha_{11} & \alpha_{12} \\ \alpha_{21} & \alpha_{22} \end{pmatrix}.
$$
We have $\det A = (-1)^n$, and it follows from repeatedly applying \eqref{a_i} that
$$
\begin{pmatrix} r_0 \\ r_{1}
\end{pmatrix}
= 
A
\begin{pmatrix} r_{n} \\ 0
\end{pmatrix}.
$$
This implies, in particular, that $\alpha_{11}=r_0/r_n$ and $\alpha_{21}=r_1/r_n$. Similarly, using \eqref{t_i} repeatedly, we have
$$
A^{-1}
\begin{pmatrix} t_0 \\ t_{1}
\end{pmatrix} =
\begin{pmatrix} t_{n} \\ t_{n+1}
\end{pmatrix}.
$$
Since $A^{-1}=(-1)^n\begin{pmatrix} \alpha_{22} & -\alpha_{12} \\ -\alpha_{21} & \alpha_{11} \end{pmatrix}$ and 
$\begin{pmatrix} t_0 \\ t_{1} \end{pmatrix}= \begin{pmatrix} 0 \\ -1\end{pmatrix}$, this implies that 
$$
t_{n+1}=(-1)^{n+1} \alpha_{11}=(-1)^{n+1} r_0/r_n.
$$
Finally, using \eqref{s_i} repeatedly, we have
$$
A^{-1}
\begin{pmatrix} s_0 \\ s_{1}
\end{pmatrix} =
\begin{pmatrix} s_{n} \\ s_{n+1}
\end{pmatrix}.
$$
As above, and since $\begin{pmatrix} s_0 \\ s_{1} \end{pmatrix}= \begin{pmatrix} 1 \\ 1\end{pmatrix}$, we find that
$$
s_{n+1}=(-1)^n (-\alpha_{21}+\alpha_{11}) = (-1)^n (r_0-r_1)/r_n.
$$
Summarizing, it follows from the equality (\ref{reduc}), the expression (\ref{f_{n+1}}) for $f_{n+1}$, and the above values of $s_{n+1},t_{n+1}$, that 
\begin{eqnarray*}
\gcd(x^{\alpha}+1,u x^{\beta}+1) & = & \gcd(f_n,f_{n+1})\\
& = & \gcd(x^{r_n}-(-1)^{s_n}u^{t_n}, 1-(-1)^{s_{n+1}}u^{t_{n+1}}) \\
& = & \gcd(x^{\delta}-(-1)^{s_n}u^{t_n}, u^{\alpha/\delta}-(-1)^{(\alpha-\beta)/\delta}).
\end{eqnarray*}
\end{proof}

\bigskip

The special case of interest to us, namely where $A=\Z[2^{-1}]$ and $u=2$, reduces to the following statement.

\begin{corollary}\label{gcd 2} Let $1 \le i \le k$ be given integers, and set $\delta=\gcd(i,k)$. Then there exists $\rho \in \Z$ such that for any odd prime $p$, we have
$$
\gcd(p^{i}+1,2p^{k-i}+1)\,=\,\gcd(p^{\delta}\pm 2^\rho, 2^{i/\delta}-(-1)^{k/\delta}).
$$
\end{corollary}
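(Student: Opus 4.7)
The plan is to derive this corollary as a direct specialization of Proposition~\ref{prop}. First I would apply the proposition with $A = \Z[2^{-1}]$ (a unique factorization domain), $x = p$, $u = 2$ (invertible in $A$), $\alpha = i$, and $\beta = k - i$. Since $\gcd(\alpha,\beta) = \gcd(i, k-i) = \gcd(i, k) = \delta$ and $\alpha/\delta = i/\delta$, the proposition will produce an integer $\rho$, depending only on $i$ and $k$ through the continued fraction algorithm applied to $(i, k-i)$, such that
$$
\gcd(p^i + 1,\, 2p^{k-i} + 1) \,=\, \gcd\bigl(p^\delta \pm 2^\rho,\; 2^{i/\delta} - (-1)^{(2i-k)/\delta}\bigr)
$$
as an equality in $\Z[2^{-1}]$ (up to units).

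The next step is a short parity check to bring the exponent of $-1$ into the form $k/\delta$ asserted in the corollary. Writing $(2i - k)/\delta = 2(i/\delta) - k/\delta$ and noting that $2(i/\delta)$ is an even integer (since $\delta \mid i$), one gets $(2i-k)/\delta \equiv k/\delta \pmod{2}$, hence $(-1)^{(2i-k)/\delta} = (-1)^{k/\delta}$. This is the only cosmetic modification needed.

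Finally, I would descend the equality from $\Z[2^{-1}]$ to $\Z$. The key observation is that both $2p^{k-i}+1$ and $2^{i/\delta}-(-1)^{k/\delta}$ are odd integers (the latter because $i/\delta \ge 1$). The units of $\Z[2^{-1}]$ are exactly $\pm 2^m$ for $m \in \Z$, and multiplying $p^\delta \pm 2^\rho$ by a power of $2$ does not alter its gcd with an odd modulus; in particular, if $\rho < 0$ one clears the denominator by multiplying through by $2^{-\rho}$. Thus the identity in $\Z[2^{-1}]$ descends to an honest equality of positive integer gcds in $\Z$.

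The main obstacle, if any, is only this careful bookkeeping at the end; the substantive content of the corollary is entirely carried by Proposition~\ref{prop}, and the crucial fact that $\rho$ (and the sign $\pm$, which is determined by the parity of $s_n$ in the notation of the proposition's proof) depends only on $i,k$ and not on $p$ is already built into that result.
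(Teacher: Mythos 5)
Your proof is correct and follows essentially the same route as the paper: specialize Proposition~\ref{prop} to $A=\Z[2^{-1}]$, $x=p$, $u=2$, $\alpha=i$, $\beta=k-i$, observe that both gcd arguments of interest are odd so the computation may be done in $\Z[2^{-1}]$, and note that $(-1)^{(2i-k)/\delta}=(-1)^{k/\delta}$. Your extra bookkeeping about the units $\pm 2^m$ of $\Z[2^{-1}]$ when descending to $\Z$ is a slightly more explicit version of the paper's opening remark that the gcd is odd, but the substance is identical.
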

\begin{proof} First observe that $\gcd(p^{i}+1,2p^{k-i}+1)$ is odd since the second argument is, so we may as well work in $\Z[2^{-1}]$ when computing this gcd. Set $\alpha=i$, $\beta=k-i$. Since $\gcd(i,k-i)=\gcd(i,k)$, the values of $\delta$ in Proposition~\ref{prop} and here are the same. Now $(\alpha-\beta)/\delta=(2i-k)/\delta$, and so
$$
(-1)^{(\alpha-\beta)/\delta} \,=\,(-1)^{k/\delta}.
$$
The claimed formula for $\gcd(p^{i}+1,2p^{k-i}+1)$ now follows directly from that in Proposition~\ref{prop}. 
\end{proof}

\medskip

Consequently, given $1 \le i \le k$, an odd prime $p$ satisfies the condition 
$$\gcd(p^i+1, 2p^{k-i})+1\, =\,1$$ 
if and only if $p$ belongs to a certain union of classes mod $(2^{i/\delta}-(-1)^{k/\delta})$, where as above $\delta=\gcd(i,k)$. This is the key to our main result below.

\section{The main result}\label{main result}

For a positive integer $n$, let $\textrm{rad}(n)$ denote the \textit{radical} of $n$, i.e. the product of the distinct primes factors of $n$. For instance, $\textrm{rad}(4)=2$ and $\textrm{rad}(6)=\textrm{rad}(12)=\textrm{rad}(18)=6$. Given $k \ge 1$, let us define

$$
M(k)\, =\, 
\textrm{rad}(\prod\limits_{i=1}^{k} \left(2^{i/\gcd(i,k)}-(-1)^{k/\gcd(i,k)}\right)).
$$
Note that if $k$ is odd, the formula becomes
$$
M(k)\, =\, 
\textrm{rad}(\prod\limits_{i=1}^{k} \left(2^{i/\gcd(i,k)}+1\right)),
$$
whereas if $k$ is even there is no such reduction in general,  since the exponent $k/\gcd(i,k)$ may assume both parities. Here is our main result.

\begin{theorem}\label{main} For any odd prime $p$ and $k \ge 1$, the value of $n(p^k,2)$ only depends on the class of $p$ modulo $M(k)$.
\end{theorem}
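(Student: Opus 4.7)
The plan is to combine Proposition~\ref{2g} with Corollary~\ref{gcd 2} and then exploit the fact that $M(k)$ is defined as a \emph{radical}. By Proposition~\ref{2g},
$$
n(p^k,2) \,=\, \sum_{i=0}^{k} \mathbf{1}\bigl[\gcd(p^i+1, 2p^{k-i}+1)=1\bigr],
$$
so if each summand is a function of $p\bmod M(k)$ then so is the whole sum, and it suffices to prove this pointwise statement. The index $i=0$ is trivial, since $\gcd(2,2p^k+1)=1$ always; this indicator is constant in $p$.

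For $1\le i \le k$, set $\delta_i=\gcd(i,k)$ and $N_i=2^{i/\delta_i}-(-1)^{k/\delta_i}$. Corollary~\ref{gcd 2} produces a sign $\varepsilon_i\in\{+1,-1\}$ and an integer $\rho_i$ that depend only on $(i,k)$---they arise from the continued fraction of $i/(k-i)$, which is $p$-free---such that
$$
\gcd(p^i+1, 2p^{k-i}+1)\,=\,\gcd\bigl(p^{\delta_i}+\varepsilon_i 2^{\rho_i},\, N_i\bigr).
$$
This equals $1$ if and only if no prime $q\mid N_i$ divides $p^{\delta_i}+\varepsilon_i 2^{\rho_i}$. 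Because $N_i$ has the shape $2^a\pm 1$ with $a\ge 1$ it is odd, so every such $q$ is odd, and $2^{\rho_i}$ is well defined modulo $q$. Hence each $q$-divisibility condition depends only on $p\bmod q$, and the full indicator depends only on $p\bmod\mathrm{rad}(N_i)$.

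To close, I would observe that $\mathrm{rad}(N_i)\mid \mathrm{rad}\!\bigl(\prod_{j=1}^{k} N_j\bigr)=M(k)$ directly from the definition of $M(k)$, so each indicator, and therefore $n(p^k,2)$, depends only on $p\bmod M(k)$. The only bits requiring care are (i) confirming that the sign $\varepsilon_i$ and exponent $\rho_i$ delivered by Corollary~\ref{gcd 2} are genuinely $p$-independent (this is manifest from the Euclidean construction in the proof of Proposition~\ref{prop}, since only the pair $(\alpha,\beta)=(i,k-i)$ enters), and (ii) the mild boundary case $i=k$, where $k-i=0$ and the Euclidean iteration degenerates at step $n=0$, producing $N_k=3$. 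Neither poses a real obstacle; the argument is then complete.
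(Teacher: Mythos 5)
Your proposal is correct and follows essentially the same route as the paper: Proposition~\ref{2g} reduces the count to a sum of indicators, Corollary~\ref{gcd 2} shows each indicator for $1\le i\le k$ depends only on $p$ modulo $m_k(i)=2^{i/\gcd(i,k)}-(-1)^{k/\gcd(i,k)}$, and taking the radical of the product gives $M(k)$. The extra care you take about the oddness of $N_i$ (so that $2^{\rho_i}$ makes sense modulo its prime divisors) and about the degenerate case $i=k$ is sound and only makes explicit what the paper leaves implicit.
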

\begin{proof}
Recall the formula given by Proposition~\ref{2g}:
\begin{equation}\label{eq p^k}
n(p^k,2) = \#\{0 \le i \le k \mid \gcd(p^i+1,2p^{k-i}+1)=1\}.
\end{equation}
If $i=0$, then $\gcd(2,2p^{k}+1)=1$ always, since $p$ is odd. Assume now $1 \le i \le k$, and set $$m_k(i)=2^{i/\gcd(i,k)}-(-1)^{k/\gcd(i,k)}.$$ By Corollary~\ref{gcd 2}, \textit{the value of $\gcd(p^i+1,2p^{k-i}+1)$ only depends on the class of $p$ mod $m_k(i)$}. Therefore, it follows from (\ref{eq p^k}) and this property of $m_i(k)$ that if we set
$$
M(k)\, =\, \textrm{rad}(\prod\limits_{i=1}^{k}m_k(i)),
$$
the value of $n(p^k,2)$ only depends on the class of $p$ mod $M(k)$.
\end{proof}

\bigskip

For concreteness, Table 1 gives the value of $M(k)$ for $1 \le k \le 10$. We have seen that $n(p^k,2)$ only depends on the class of $p$ modulo $M(k)$. But $M(k)$ is not necessarily the \textit{smallest} modulus with this property, only a multiple of it. For instance, we have $M(4)=21$, but the value of $n(p^4,2)$ only depends on the class of $p$ mod 7, as stated in the Introduction. However, for all \textit{odd} $k$ in the range $1 \le k \le 9$, the modulus $M(k)$ actually turns out to be optimal for the desired property. (See \cite{Eliahou Ramirez} and Section~\ref{k <= 8}.)

\begin{table}[h]\label{M(k)}
{\arraycolsep2mm
$$
\begin{array}{|c||c|c|c|c|c|c|c|c|c|c|}
\hline
k &1 & 2 & 3 & 4 & 5 & 6 & 7 & 8 & 9 & 10\\  
\hline \hline 
M(k) &  3 & 3  & 15 & 21 & 255 & 465 & 36465 & 82677 & 30998055 &16548735\\ \hline
\end{array}
$$}
\caption{First 10 values of $M(k)$.}
\end{table}

\section{The basic functions $X_{a,q}$} \label{X_{a,q}}
We now introduce numerical functions $X_{a,q}$, with values in $\{0,1\}$, which will subsequently serve as building blocks in our explicit formulas for $n(p^k,2)$ with $k \le 10$. Given integers $a,q$ with $q \ge 2$, the definition of 
$$
X_{a,q}: \Z \to \{0,1\}
$$
depends on the distinct prime factors of $q$, as follows.
\begin{itemize}
\item If $q$ is prime, then $X_{a,q}$ is the indicator function of the complement of the subset $a+q\Z$ in $\Z$, i.e. 
$$
X_{a,q}(n)\,=\,\left\{
\begin{array}{ll}
1 & \textrm{ if } n\not\equiv a \bmod q,\\
0 & \textrm{ if } n\equiv a \bmod q.
\end{array}
\right.
$$
\item If $q_1,\ldots,q_t$ are the distinct prime factors of $q$, then we set
$$
X_{a,q} \,=\, \prod_{i=1}^t X_{a,q_i}.
$$
\end{itemize}
In particular, since $X_{a,q}$ only depends on the prime factors of $q$, we have
$$
X_{a,q} \,=\, X_{a,\textrm{rad}(q)}.
$$
Note that $X_{a,q}$ \textit{only depends on the class of $a$ mod $q$.} It is also plain that $X_{a,q}(n)$ only depends on the class of $n$ mod $q$. 

\bigskip

We now establish a few more properties of these functions. The first one links $X_{a,q}(n)$ with $\gcd(n-a,q)$, and so will be useful to capture occurrences of the equality $\gcd(p^i+1, 2p^{k-i}+1)=1$.
\begin{proposition}
\label{X_a,q}
Let $a,q$ be integers with $q \ge 2$. For all $n\in \Z$, we have
$$
X_{a,q}(n)\,=\,\left\{
\begin{array}{ll}
1 & \textrm{ if } \gcd(n-a,q) = 1,\\
0 & \textrm{ if not.}
\end{array}
\right.
$$
\end{proposition}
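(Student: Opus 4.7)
The plan is to unwind the definition of $X_{a,q}$ in two stages, matching its two-part construction: first handle the case when $q$ is prime directly from the indicator-function definition, then lift to arbitrary $q$ via the product formula and the standard characterization of coprimality in terms of prime factors.

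First I would dispose of the prime case. Suppose $q$ is prime. Then $\gcd(n-a,q)$ is either $1$ or $q$, and it equals $q$ precisely when $q \mid n-a$, i.e.\ $n \equiv a \bmod q$. By the definition of $X_{a,q}$ in the prime case, $X_{a,q}(n)=1$ iff $n \not\equiv a \bmod q$, iff $\gcd(n-a,q)=1$. This gives the proposition when $q$ is prime.

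For the general case, let $q_1,\ldots,q_t$ be the distinct prime factors of $q$. The key elementary fact is that $\gcd(n-a,q)=1$ if and only if no prime factor of $q$ divides $n-a$, i.e.\ $q_i \nmid (n-a)$ for every $i$. I would combine this with the definition
$$
X_{a,q}(n)\,=\,\prod_{i=1}^t X_{a,q_i}(n),
$$
where each factor lies in $\{0,1\}$. Hence the product equals $1$ iff every factor equals $1$, and by the prime case each factor equals $1$ iff $\gcd(n-a,q_i)=1$, iff $q_i \nmid (n-a)$. Chaining these equivalences yields $X_{a,q}(n)=1$ iff $\gcd(n-a,q)=1$, and $X_{a,q}(n)=0$ otherwise.

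There is essentially no obstacle here; the statement is a straightforward bookkeeping exercise given the definitions. The only point to mention carefully is that $X_{a,q}(n)$ takes only the values $0$ and $1$, so the product characterization of $X_{a,q}(n)=1$ as ``all factors equal $1$'' is immediate and no further case analysis is needed.
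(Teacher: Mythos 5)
Your proof is correct and follows essentially the same route as the paper: both reduce to the prime factors $q_1,\ldots,q_t$, use the fact that each $X_{a,q_i}(n)=1$ iff $q_i \nmid (n-a)$, and chain these equivalences with the characterization of $\gcd(n-a,q)=1$ as no $q_i$ dividing $n-a$. Nothing is missing.
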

\begin{proof} Let $q_1,\ldots,q_t$ be the distinct prime factors of $q$. Then we have
\begin{eqnarray*}
X_{a,q}(n) = 1
& \Longleftrightarrow & X_{a,q_i}(n) = 1 \,\,\forall i \\
& \Longleftrightarrow & n \not \equiv a \bmod q_i \,\,\forall i \\
& \Longleftrightarrow & \gcd(n-a,q_i) = 1 \,\,\forall i\\
& \Longleftrightarrow & \gcd(n-a,q)  =  1.
\end{eqnarray*}
Since $X_{a,q}(n)$ only takes values in $\{0,1\}$, this implies that $X_{a,q}(n)=0$ if and only if $\gcd(n-a,q)  \not=  1$.
\end{proof}

\bigskip

Next, for determining $n(p^k,2)$, we often need to evaluate $X_{a,q}(p^s)$ with $s \ge 2$. The next two properties help remove that exponent $s$. The first one reduces the task to the case where $s$ divides $q-1$. It suffices to consider the case where $q$ is prime.

\begin{proposition} Let $q$ be a prime number, and let $a,s$ be integers with $s\ge 2$. Write $s=t e$ with $t =\gcd(s,q-1)$, so that $\gcd(e,q-1)=1$. Let $d \in \N$ satisfy $d e \equiv 1 \bmod q-1$. Then
$$
X_{a,q}(n^s) \,=\, X_{a^d,q}(n^t) 
$$
for all integers $n$.
\end{proposition}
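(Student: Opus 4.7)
The plan is to reduce the claim to the equivalence
\[
n^s \equiv a \pmod{q} \iff n^t \equiv a^d \pmod{q},
\]
since by the definition of $X_{a,q}$ for prime $q$, this is exactly what the identity $X_{a,q}(n^s)=X_{a^d,q}(n^t)$ asserts (both functions take values in $\{0,1\}$, and each indicator vanishes precisely when the corresponding congruence holds).

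First I would dispose of the degenerate case $q \mid n$: then $n^s \equiv n^t \equiv 0 \pmod{q}$, so the desired equivalence reduces to $a \equiv 0 \pmod{q} \iff a^d \equiv 0 \pmod{q}$. This holds because $q$ is prime and $d \ge 1$ (a positive $d$ with $de \equiv 1 \pmod{q-1}$ exists since $\gcd(e,q-1)=1$).

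For the main case $q \nmid n$, I would work inside the cyclic group $(\Z/q\Z)^*$ of order $q-1$ and use Fermat's little theorem to reduce exponents on $n$ modulo $q-1$. The key arithmetic observation is
\[
sd \;=\; ted \;\equiv\; t \pmod{q-1},
\]
obtained from $s = te$ and $de \equiv 1 \pmod{q-1}$. The forward direction then follows by raising $n^s \equiv a \pmod{q}$ to the power $d$: one gets $n^t \equiv n^{sd} \equiv a^d \pmod{q}$. For the converse, raising $n^t \equiv a^d \pmod{q}$ to the power $e$ yields $n^s \equiv a^{de} \pmod{q}$, and it remains to conclude $a^{de} \equiv a \pmod{q}$.

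The last step is where the (mild) main obstacle lies, namely tracking whether $q \mid a$. If $q \nmid a$, Fermat's little theorem together with $de \equiv 1 \pmod{q-1}$ gives $a^{de} \equiv a \pmod{q}$ directly. If $q \mid a$, then $a^d \equiv 0 \pmod{q}$, so the hypothesis $n^t \equiv a^d \pmod{q}$ would force $q \mid n$, contradicting our standing assumption $q \nmid n$; hence this subcase does not occur, completing the converse and the proof.
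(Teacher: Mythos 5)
Your proof is correct and follows essentially the same route as the paper: both reduce the identity to the equivalence $n^s \equiv a \bmod q \iff n^t \equiv a^d \bmod q$ and establish it via the RSA fact that $x \mapsto x^{de}$ is the identity on $\Z/q\Z$ when $de \equiv 1 \bmod (q-1)$. The only difference is that you make explicit the case analysis ($q \mid n$, $q \mid a$) that the paper absorbs into its one-line appeal to exponentiation by $e$ being a bijection of $\Z/q\Z$ with inverse exponentiation by $d$; this is a harmless, slightly more careful rendering of the same argument.
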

\begin{proof} 
This is the heart of the RSA cryptographic protocol, which relies on the fact that exponentiation to the power $e$ in $\Z/q\Z$ is a bijection, whose inverse is exponentiation to the power $d$. We have
\begin{eqnarray*}
X_{a,q}(n^s) = 0
& \Longleftrightarrow & n^s \equiv a \bmod q \\
& \Longleftrightarrow & (n^t)^e \equiv a \bmod q \\
& \Longleftrightarrow & (n^t)^{de} \equiv a^d \bmod q \\
& \Longleftrightarrow & n^t \equiv a^d \bmod q \\
& \Longleftrightarrow & X_{a^d,q}(n^t) = 0.
\end{eqnarray*}
\end{proof}

\bigskip

Thus, we may now assume that the exponent $s$ divides $q-1$. 

\begin{proposition}\label{p^s} Let $q$ be a prime number, and let $a,s$ be integers with $s$ dividing $q-1$. Let $g \in \N$ be an integer whose class mod $q$ generates the multiplicative group of non-zero elements in $\Z/q\Z$. We have:
\begin{itemize}
\item If $a$ is not an $s$-power mod $q$, then $X_{a,q}(n^s) = 1$ for all $n$.
\item If $a$ in an $s$-power mod $q$, then $a \equiv g^{si} \bmod q$ for some integer $i$ such that $0 \le i \le (q-1)/s-1$, and 
$$
X_{a,q}(n^s) \,=\, \prod_{j=0}^{s-1}X_{g^{i+j(q-1)/s},q}(n)
$$
for all integers $n$.
\end{itemize}
\end{proposition}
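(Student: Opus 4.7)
The plan is to transfer the question about the $s$-th power map in $\mathbb{Z}/q\mathbb{Z}$ into a linear congruence in discrete logarithms with respect to the fixed primitive root $g$. Since $X_{a,q}(n^s)$ depends only on $n \bmod q$, I can work entirely in $\mathbb{Z}/q\mathbb{Z}$. In both bullets, the value $a$ is a unit mod $q$: in the second bullet because $a \equiv g^{si}$, and in the first bullet because otherwise $a \equiv 0 \equiv 0^s$ would be an $s$-th power, contrary to the hypothesis. Consequently $n \equiv 0 \bmod q$ gives $n^s \equiv 0 \not\equiv a$, so $X_{a,q}(n^s) = 1$; in the second bullet the product formula also evaluates to $1$ there, since $0 \not\equiv g^{i+j(q-1)/s}$ for any $j$. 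This lets me restrict attention to $n \not\equiv 0$, writing $n \equiv g^m \bmod q$ with $m \in \{0,\ldots,q-2\}$.

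For the first bullet, I would invoke the standard fact that the $s$-th power map on $(\mathbb{Z}/q\mathbb{Z})^\times$ has image $\langle g^s \rangle$, a subgroup of index $s$. If $a$ lies outside this subgroup, then $n^s \equiv a \bmod q$ has no solution, so $X_{a,q}(n^s) = 1$ for every $n$, as claimed.

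For the second bullet, I would write $a \equiv g^{si}$ with $0 \le i \le (q-1)/s - 1$. Then for $n \equiv g^m$, the congruence $n^s \equiv a \bmod q$ becomes $g^{sm} \equiv g^{si}$, equivalently $sm \equiv si \bmod (q-1)$, equivalently $m \equiv i \bmod (q-1)/s$. The solutions in $\{0,\ldots,q-2\}$ are exactly the $s$ values $m = i + j(q-1)/s$ for $j = 0,1,\ldots,s-1$. Translating back through $n \equiv g^m$, we obtain that $X_{a,q}(n^s) = 0$ if and only if $n \equiv g^{i + j(q-1)/s} \bmod q$ for some such $j$, if and only if at least one factor $X_{g^{i+j(q-1)/s},q}(n)$ vanishes, if and only if the product $\prod_{j=0}^{s-1} X_{g^{i+j(q-1)/s},q}(n)$ vanishes. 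Since both sides take values in $\{0,1\}$, the identity follows.

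The main ``obstacle'' is minor bookkeeping: checking that the $s$ exponents $i + j(q-1)/s$ for $j=0,\ldots,s-1$ are pairwise distinct mod $q-1$ (their differences are nonzero multiples of $(q-1)/s$ bounded by $(s-1)(q-1)/s < q-1$), and noting that a product of $\{0,1\}$-valued functions encodes a logical disjunction of the individual vanishing conditions exactly as required. Neither step presents any real difficulty, so the heart of the proof is really just the discrete-logarithm translation above.
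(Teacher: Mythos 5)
Your proof is correct and follows essentially the same route as the paper's: both identify the solutions of $n^s\equiv a\bmod q$ as the coset $\{g^{i+j(q-1)/s}\}_{0\le j\le s-1}$, the paper by recognizing $n/g^i$ as an element of the order-$s$ subgroup generated by $g^{(q-1)/s}$ and you by the equivalent additive computation $sm\equiv si \bmod (q-1)$ in discrete logarithms. Your explicit handling of $n\equiv 0\bmod q$ is a small point the paper passes over silently, but it does not change the substance of the argument.
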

\begin{proof} In the group $(\Z/q\Z)^*$ of nonzero classes mod $q$, the set of $s$-powers is of cardinality $(q-1)/s$ and coincides with
$$
\{g^{si} \bmod q \mid 0 \le i \le (q-1)/s-1\}.
$$
First, if $a$ is not an $s$-power mod $q$, then $n^s \not \equiv a \bmod q$ for all $n$, implying $X_{a,q}(n^s) = 1$ for all $n$. Assume now $a$ is an $s$-power mod $q$. By the above remark,  there exists $0 \le i \le (q-1)/s-1$ such that $a \equiv g^{si} \bmod q$. We have
\begin{eqnarray*}
X_{a,q}(n^s) = 0
& \iff & n^s \equiv a \bmod q \\
& \iff & n^s \equiv g^{si} \bmod q \\
& \iff & \left(\frac{n}{g^i}\right)^s \equiv 1 \bmod q.
\end{eqnarray*}
This means that $n/g^i$ is of order dividing $s$ in the group $(\Z/q\Z)^*$. Now, the elements of order dividing $s$ in this group constitute a subgroup of order $s$ generated by $g^{(q-1)/s}$. Thus, there exists an integer $j$ such that $0 \le j \le s-1$ and satisfying
$$
\frac{n}{g^i} \equiv g^{j(q-1)/s} \bmod q,
$$
yielding
$$
X_{a,q}(n^s) = 0 \iff n \equiv g^{i+j(q-1)/s} \bmod q. 
$$
Summarizing, for $a \equiv g^{si} \bmod q$, we have established the equivalence
$$
X_{a,q}(n^s) = 0 \iff \prod_{j=0}^{s-1}X_{g^{i+j(q-1)/s},q}(n) = 0,
$$
whence the claimed equality $X_{a,q}(n^s) = \prod_{j=0}^{s-1}X_{g^{i+j(q-1)/s},q}(n)$.
\end{proof}

\begin{example}\label{X_8,17} In order to establish our formula for $n(p^{10},2)$ in Section~\ref{k=9}, the term $X_{8,17}(p^2)$ turns out to be involved. Now $8$ is a square mod $17$, namely $8 \equiv 5^2 \equiv 12^2 \bmod 17$. Thus, the above result yields 
$$X_{8,17}(p^2)\,=\,X_{5,17}(p)X_{12,17}(p).$$ 
\end{example}

\section{The cases $k=9, 10$}\label{k=9}
Explicit formulas for $n(p^k,2)$ with $p$ an odd prime and $k \le 6$ or $k=8$ are given in \cite{Eliahou Ramirez}. Here we go further and treat the case $k=9$ in detail. This will show how Corollary~\ref{gcd 2} can be applied, and will also give a sense of the increasing complexity of these formulas. We also briefly address the case $k=10$. The main ingredients are the basic functions $X_{a,q}$ defined in the preceding section.

Here comes our formula for $n(p^9,2)$. The fact that it depends on the class of $p$ mod $M(9)$ follows from this prime decomposition: 
$$M(9)\,=\,30998055\,=\,5 \cdot 17 \cdot 257 \cdot 3 \cdot 11 \cdot 43.$$

\begin{theorem} Let $p$ be an odd prime. Then we have
$$
n(p^9,2)=1+2X_{3,5}(p)+X_{9,17}(p)+X_{128,257}(p)+X_{2,3}(p)\cdot(3+X_{2,11}(p)+X_{8,43}(p)).
$$
\end{theorem}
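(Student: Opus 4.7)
The plan is to apply Proposition~\ref{2g} to write $n(p^9,2)$ as the count of indices $0 \le i \le 9$ satisfying $\gcd(p^i+1,\,2p^{9-i}+1) = 1$, then evaluate each of the ten conditions via Corollary~\ref{gcd 2} and translate the results into the basic functions $X_{a,q}$ introduced in Section~\ref{X_{a,q}}. The index $i=0$ contributes $1$ trivially, since $\gcd(2,\,2p^9+1) = 1$.

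For each $i$ from $1$ to $9$, I would set $\delta = \gcd(i,9)$ and compute the reduction modulus $c_i = 2^{i/\delta} - (-1)^{9/\delta}$ from Corollary~\ref{gcd 2}. Because $9/\delta \in \{1,3,9\}$ is always odd, this simplifies to $c_i = 2^{i/\delta}+1$, yielding
$$c_1,c_2,\ldots,c_9 \; = \; 3,\,5,\,3,\,17,\,33,\,5,\,129,\,257,\,3.$$
For each case I would then execute the Euclidean algorithm on $(\alpha,\beta) = (i,\,9-i)$ exactly as in the proof of Proposition~\ref{prop} with $u = 2$, carrying along the sequences $(s_j)$ and $(t_j)$ to determine the sign $(-1)^{s_n}$ and the exponent $\rho = t_n$ in the reduced equality $\gcd(p^i+1,\,2p^{9-i}+1) = \gcd(p^\delta - (-1)^{s_n}2^{t_n},\, c_i)$. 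Clearing the power of $2$ (a unit in $\mathbb{Z}[2^{-1}]$) and applying Proposition~\ref{X_a,q} then expresses the condition $\gcd = 1$ as a product of $X_{a,q}$'s over the prime factors of $c_i$.

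This routine should yield the following contributions: $X_{3,5}(p)$ from each of $i=2,6$ (the latter after using $p^3 \equiv p^{-1} \bmod 5$ from Fermat); $X_{9,17}(p)$ from $i=4$; $X_{128,257}(p)$ from $i=8$; $X_{2,3}(p)$ from each of $i=1,3,9$ (using $p^\delta \equiv p \bmod 3$ to eliminate the exponent on $p$); $X_{2,3}(p)\,X_{2,11}(p)$ from $i=5$ (the reduction being $\gcd(p-2,\,33)=1$); and $X_{2,3}(p)\,X_{8,43}(p)$ from $i=7$ (the reduction being $\gcd(p-8,\,129)=1$, with $8 \equiv 2 \bmod 3$ so the mod-$3$ factor merges with the previous four). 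Summing and factoring $X_{2,3}(p)$ out of the five odd-$i$ terms produces the stated formula.

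The main obstacle is simply the case-by-case bookkeeping of signs and exponents through nine separate runs of the Euclidean algorithm, together with the final inversion of exponentiations modulo each prime factor of $c_i$ to put each condition into the canonical form $p \equiv a \bmod q$ required by $X_{a,q}$. These inversions are elementary---either by direct computation in $(\mathbb{Z}/q\mathbb{Z})^*$ (for instance, the residues $3$, $9$ and $128$ arise as $2^{-1}$ mod $5$, $2^{-1}$ mod $17$, and $-2^{-1}$ mod $257$) or by a one-line application of Fermat's little theorem in the cases where $\delta > 1$.
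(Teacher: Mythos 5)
Your proposal is correct and follows essentially the same route as the paper: reduce via Proposition~\ref{2g}, apply Corollary~\ref{gcd 2} case by case for $i=1,\dots,9$ to get the moduli $3,5,3,17,33,5,129,257,3$, translate each condition into an $X_{a,q}$ (your intermediate reductions, e.g.\ $\gcd(p-2,33)$ for $i=5$ and $\gcd(p-8,129)$ for $i=7$, and the residues $3=2^{-1}\bmod 5$, $9=2^{-1}\bmod 17$, $128=-2^{-1}\bmod 257$ all match the paper's), and factor out $X_{2,3}(p)$ from the odd-index contributions.
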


\medskip
\noindent
\begin{proof} By Proposition~\ref{2g}, in order to determine $n(p^9,2)$, it suffices to count those exponents $i$ between 0 and 9 satisfying $\gcd(p^i+1, 2p^{9-i}+1)=1$. Using Corollary~\ref{gcd 2} and the calculations leading to it, these gcd's may be reduced as follows:
\begin{eqnarray*}
\gcd(p^0+1, 2p^9+1) & = & 1\\
\gcd(p^1+1, 2p^8+1) & = & \gcd(p+1,3)\\
\gcd(p^2+1, 2p^7+1) & = & \gcd(2p-1,5)\\
\gcd(p^3+1, 2p^6+1) & = & \gcd(p^3+1,3)\,=\,\gcd(p+1,3)\\
\gcd(p^4+1, 2p^5+1) & = & \gcd(2p-1,17)\\
\gcd(p^5+1, 2p^4+1) & = & \gcd(p-2,33)\\
\gcd(p^6+1, 2p^3+1) & = & \gcd(2p^3+1,5)\\
\gcd(p^7+1, 2p^2+1) & = & \gcd(p-8,129)\\
\gcd(p^8+1, 2p^1+1) & = & \gcd(2p+1,257)\\
\gcd(p^9+1, 2p^0+1) & = & \gcd(p^9+1,3)\,=\,\gcd(p+1,3).
\end{eqnarray*}
Now, by Proposition~\ref{X_a,q} and the properties of the functions $X_{a,q}$, these equalities imply the following equivalences:
\begin{eqnarray*}
\gcd(p^0+1, 2p^9+1) = 1 & & \textrm{always}\\
\gcd(p^1+1, 2p^8+1) = 1 & \iff & X_{2,3}(p) =1\\
\gcd(p^2+1, 2p^7+1) = 1 & \iff & X_{3,5}(p) =1\\
\gcd(p^3+1, 2p^6+1) = 1 & \iff & X_{2,3}(p) =1\\
\gcd(p^4+1, 2p^5+1) = 1 & \iff & X_{9,17}(p) =1\\
\gcd(p^5+1, 2p^4+1) = 1 & \iff & X_{2,33}(p) =1\\
\gcd(p^6+1, 2p^3+1) = 1 & \iff & X_{3,5}(p) =1\\
\gcd(p^7+1, 2p^2+1) = 1 & \iff & X_{8,129}(p) =1\\
\gcd(p^8+1, 2p^1+1) = 1 & \iff & X_{128,257}(p) =1\\
\gcd(p^9+1, 2p^0+1) = 1 & \iff & X_{2,3}(p) =1.\\
\end{eqnarray*}
Read sequentially, this table directly yields the following first formula for $n(p^9,2)$, with 10 summands, in terms of the functions $X_{a,q}$:
\begin{eqnarray*}
n(p^9,2) & = & 1+X_{2,3}(p)+X_{3,5}(p)+X_{2,3}(p)+X_{9,17}(p) 
+X_{2,33}(p)\\ & & \quad +X_{3,5}(p)+X_{8,129}(p)+X_{128,257}(p)+X_{2,3}(p) \\
& = & 1+3X_{2,3}(p)+2X_{3,5}(p)+X_{9,17}(p) +X_{2,33}(p)+X_{8,129}(p)\\ & & \quad +X_{128,257}(p).
\end{eqnarray*}
Among the moduli involved above, the only non-prime ones are $33 = 3 \cdot 11$ and $129=3 \cdot 43$. By definition of $X_{a,q}$ for non-prime $q$, we have
\begin{eqnarray*}
X_{2,33} & =\ & X_{2,3}X_{2,11}\\
X_{8,129} & = & X_{8,3}X_{8,43}.
\end{eqnarray*}
Moreover, since $X_{a,q}$ only depends on the class of $a$ mod $q$, we have $$X_{8,3}=X_{2,3}.$$ 
Substituting these equalities in the above formula for $n(p^9,2)$, we get
$$
n(p^9,2)=1+2X_{3,5}(p)+X_{9,17}(p)+X_{128,257}(p)+X_{2,3}(p)\cdot(3+X_{2,11}(p)+X_{8,43}(p)),
$$
as claimed.
\end{proof}

\bigskip
\medskip

We now derive another version of our formula for $n(p^9,2)$, from which its values are easier to read. Given positive integers $q_1,\ldots, q_t$, we denote by
$$
\rho_{q_1,\ldots,q_t}: \Z \; \rightarrow\; \Z/q_1 \Z \times \cdots \times \Z/q_t\Z
$$
the canonical reduction morphism $\rho_{q_1,\ldots,q_t}(n) = (n \bmod q_1, \ldots, n \bmod q_t)$. Moreover, we write $n \equiv \neg a \bmod q$ instead of $n \not\equiv a \bmod q$. For example, the condition $$\rho_{5,17,257}(p) = (3,\neg 9,\neg 128)$$ means $p \equiv 3 \bmod 5$, $p \not\equiv 9 \bmod 17$ and $p  \not\equiv 128 \bmod 257$.

\medskip

\begin{corollary} Let $p$ be an odd prime. Consider the following functions of $p$ depending on its classes mod $5, 17, 257$ and $11, 43$, respectively:
\begin{eqnarray*}
\lambda(p) & = & \left\{
\begin{array}{rcl}
1 & \textrm{ if } & \rho_{5,17,257}(p) = (3, 9,128) \\
2 & \textrm{ if } & \rho_{5,17,257}(p) \in \{(3, 9, \neg 128), (3, \neg  9, 128)\} \\
3 & \textrm{ if } & \rho_{5,17,257}(p) \in \{(3, \neg 9, \neg 128), (\neg 3, 9, 128)\} \\
4 & \textrm{ if } & \rho_{5,17,257}(p) \in \{(\neg 3, 9, \neg 128), (\neg 3, \neg 9, 128)\} \\
5 & \textrm{ if } & \rho_{5,17,257}(p) = (\neg 3, \neg 9, \neg 128),
\end{array}
\right.\\ \\
\mu(p) & = & \left\{
\begin{array}{rcl}
3 & \textrm{ if } & \rho_{11,43}(p) = (2, 8) \\
4 & \textrm{ if } & \rho_{11,43}(p) \in \{(2, \neg 8), (\neg 2, 8)\} \\
5 & \textrm{ if } & \rho_{11,43}(p) = (\neg 2, \neg 8). \\
\end{array}
\right.
\end{eqnarray*}
Then we have
$$
n(p^9,2) \,=\, \left\{
\begin{array}{ll}
\lambda(p) & \textrm{ if } p\equiv 2 \bmod 3,\\
\lambda(p) + \mu(p) & \textrm{ if } p\not\equiv 2 \bmod 3.
\end{array}
\right.
$$
\end{corollary}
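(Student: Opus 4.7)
The plan is to derive the corollary directly from the formula
$$
n(p^9,2)=1+2X_{3,5}(p)+X_{9,17}(p)+X_{128,257}(p)+X_{2,3}(p)\cdot(3+X_{2,11}(p)+X_{8,43}(p))
$$
established in the preceding theorem, by repackaging the indicator functions $X_{a,q}$ into the piecewise expressions $\lambda(p)$ and $\mu(p)$. The key observation is that the formula factors cleanly as
$$
n(p^9,2)\,=\,\bigl(1+2X_{3,5}(p)+X_{9,17}(p)+X_{128,257}(p)\bigr)\,+\,X_{2,3}(p)\cdot\bigl(3+X_{2,11}(p)+X_{8,43}(p)\bigr),
$$
so it suffices to identify the first summand with $\lambda(p)$ and the bracketed factor in the second summand with $\mu(p)$.

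The first step is to verify that $\lambda(p)=1+2X_{3,5}(p)+X_{9,17}(p)+X_{128,257}(p)$. For this I would write out the eight possible triples
$(X_{3,5}(p),X_{9,17}(p),X_{128,257}(p))\in\{0,1\}^3$, each of which corresponds, by definition of the $X_{a,q}$, to exactly one profile $\rho_{5,17,257}(p)\in\{3,\neg 3\}\times\{9,\neg 9\}\times\{128,\neg 128\}$. In each case I would compute $1+2X_{3,5}(p)+X_{9,17}(p)+X_{128,257}(p)\in\{1,2,3,4,5\}$ and match it to the value prescribed by the definition of $\lambda$. Analogously, for $\mu(p)=3+X_{2,11}(p)+X_{8,43}(p)$ I would go through the four cases $(X_{2,11}(p),X_{8,43}(p))\in\{0,1\}^2$ and check they yield the values $3,4,4,5$ listed in the definition of $\mu$.

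Having identified these two expressions, the final step is to split on the value of $X_{2,3}(p)$. Since $X_{2,3}(p)=0$ exactly when $p\equiv 2\bmod 3$ and $X_{2,3}(p)=1$ otherwise, substituting into the factored formula gives $n(p^9,2)=\lambda(p)$ in the first case and $n(p^9,2)=\lambda(p)+\mu(p)$ in the second, which is the statement of the corollary.

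The only potentially error-prone part is the bookkeeping in the $8+4=12$ cases above; there is no conceptual obstacle, since the corollary is just a rewriting of the theorem's formula in a form where the value of $n(p^9,2)$ can be read off directly from the residue classes of $p$ modulo $5,11,17,43,257$ (and its residue mod $3$). To keep the argument compact, rather than tabulate all cases in the final writeup I would note that $1+2a+b+c$ with $a,b,c\in\{0,1\}$ ranges over $\{1,\ldots,5\}$ with the stated multiplicities, and likewise $3+e+f$ with $e,f\in\{0,1\}$ ranges over $\{3,4,5\}$, matching the definitions of $\lambda$ and $\mu$ respectively.
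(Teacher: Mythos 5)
Your proposal is correct and is essentially the paper's own proof: the paper likewise derives the corollary by observing that $\lambda(p) = 1+2X_{3,5}(p)+X_{9,17}(p)+X_{128,257}(p)$ and $\mu(p) = 3+X_{2,11}(p)+X_{8,43}(p)$, then reading off the two cases according to whether $X_{2,3}(p)$ is $0$ or $1$. Your case-by-case verification of these two identities is exactly the ``easy to prove equalities'' the paper leaves to the reader, and your bookkeeping checks out.
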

\begin{proof} This directly follows from the preceding result and the easy to prove equalities
\begin{eqnarray*}
\lambda(p) & = & 1+2X_{3,5}(p)+X_{9,17}(p)+X_{128,257}(p),\\
\mu(p) & = & 3+X_{2,11}(p)+X_{8,43}(p).
\end{eqnarray*}
\end{proof}

\bigskip
It is still clearer now that $n(p^9,2)$ is determined by the class of $p$ mod $M(9)=3\cdot 5\cdot 17\cdot 257\cdot 11\cdot 43$, and that $M(9)$ is the smallest modulus with this property.

\bigskip
We close this section by briefly treating the case $k=10$. The formula obtained shows that $n(p^{10},2)$, for $p$ an odd prime, is determined by the class of $p$ modulo $M(10)/15 \,=\, 7 \cdot 17 \cdot 73 \cdot 127$.

\begin{theorem} Let $p$ be an odd prime. Then we have
$$
n(p^{10},2)=7+X_{3,7}(p)(1+X_{36,73}(p))+X_{5,17}(p)X_{12,17}(p)+X_{123,127}(p).
$$
\end{theorem}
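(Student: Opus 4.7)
The plan is to follow the same scheme as in the proof for $k=9$. By Proposition~\ref{2g}, $n(p^{10},2)$ equals the number of indices $i\in\{0,1,\ldots,10\}$ such that $\gcd(p^i+1,2p^{10-i}+1)=1$, and Corollary~\ref{gcd 2} (together with the explicit computations in the proof of Proposition~\ref{prop}) reduces each such gcd to the form $\gcd(p^{\delta}\pm 2^\rho,\,2^{i/\delta}-(-1)^{10/\delta})$ with $\delta=\gcd(i,10)$. The first task is therefore to perform these eleven Euclidean-type reductions, each amounting to a short calculation modulo $p^i+1$ or $2p^{10-i}+1$.

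A notable feature of the case $k=10$, in contrast with the odd case $k=9$, is that the minus sign in $2^{i/\delta}-(-1)^{10/\delta}$ makes most reductions trivial. Specifically, $i=0$ gives $\gcd(2,\text{odd})=1$; the cases $i\in\{1,5\}$ produce modulus $2^{1}-1=1$ (since $i/\delta=1$ and $10/\delta$ is even); $i=10$ gives $\gcd(p^{10}+1,3)=1$, because $p^{10}\equiv 1\bmod 3$ whenever $p\neq 3$ (and the $p=3$ case is immediate); and $i\in\{2,4,6\}$ produce conditions of the form $\gcd(p^2\pm 2^\rho,q)$ with $q\in\{3,5,9\}$, each identically $1$ by inspection of the image of $p^2$ modulo $3$ and $5$. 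These seven automatic indices contribute the constant $7$ in the stated formula.

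The remaining four indices $i\in\{3,7,8,9\}$ carry the genuine $p$-dependence. The reductions should yield, respectively, $\gcd(2p+1,7)$, $\gcd(p+4,127)$, $\gcd(2p^2+1,17)$, and $\gcd(2p+1,7\cdot 73)$. By Proposition~\ref{X_a,q} and the multiplicative definition of $X_{a,q}$ for composite $q$, these translate into $X_{3,7}(p)$, $X_{123,127}(p)$, the indicator $X_{8,17}(p^2)$ (since $2p^2+1\equiv 0\bmod 17$ is equivalent to $p^2\equiv 8\bmod 17$), and $X_{3,7}(p)X_{36,73}(p)$, respectively. Using Proposition~\ref{p^s} together with $8\equiv 5^2\equiv 12^2\bmod 17$, as spelled out in Example~\ref{X_8,17}, one further rewrites $X_{8,17}(p^2)=X_{5,17}(p)X_{12,17}(p)$. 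Summing all eleven contributions and then factoring $X_{3,7}(p)$ out of the $i=3$ and $i=9$ terms then yields the claimed formula.

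The main obstacle will be the bookkeeping of the eleven Euclidean reductions, together with careful verification that the seven ``easy'' cases really do give gcd $=1$ for \emph{every} odd prime (the case $p=3$ must be inspected directly whenever the small modulus could be $3$). The only step requiring non-elementary input is $i=8$, where the quadratic condition $p^2\equiv 8\bmod 17$ must be unfolded into two linear conditions on $p$ via Proposition~\ref{p^s}, exactly as illustrated in Example~\ref{X_8,17}.
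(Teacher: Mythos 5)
Your proposal is correct and follows essentially the same route as the paper: reduce each of the eleven gcd's via Corollary~\ref{gcd 2}, observe that the seven indices $i\in\{0,1,2,4,5,6,10\}$ contribute $1$ identically (because the relevant quadratic-residue conditions mod $3$ and $5$ fail or the modulus is $1$), translate the four remaining conditions into $X_{3,7}(p)$, $X_{123,127}(p)$, $X_{8,17}(p^2)$ and $X_{255,511}(p)=X_{3,7}(p)X_{36,73}(p)$, and unfold $X_{8,17}(p^2)=X_{5,17}(p)X_{12,17}(p)$ as in Example~\ref{X_8,17}. The intermediate gcd forms you list for $i=3,7,8,9$ all check out against the Euclidean reduction, so no gap remains.
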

\begin{proof} After reducing $\gcd(p^i+1, 2p^{10-i}+1)$ for $0 \le i \le 10$ as in Corollary~\ref{gcd 2}, and using Proposition~\ref{X_a,q} involving the functions $X_{a,q}$, we obtain this first raw formula:
\begin{eqnarray*}
n(p^{10},2) & = & 2+X_{-1,3}(p^2)+X_{3,7}(p)+X_{-2,5}(p^2)+1+X_{2,9}(p^2)+X_{123,127}(p)\\& & +\, X_{8,17}(p^2)+X_{255,511}(p)+X_{-1,3}(p^{10}).
\end{eqnarray*}
We now invoke Proposition~\ref{p^s} several times. Since $-1$ is not a square mod 3, we have $X_{-1,3}(p^2)=1$. The same reason yields $X_{2,9}(p^2)=X_{-1,3}(p^{10})=1$. Similarly, we have $X_{-2,5}(p^2)=1$ as $-2$ is not a square mod 5. As already explained in Example~\ref{X_8,17}, we have $X_{8,17}(p^2)=X_{5,17}(p)X_{12,17}(p)$. Finally, since $511=7\cdot 73$, and since 255 is congruent to 3 mod 7 and to 36 mod 73, we have
$$
X_{255,511}(p) \,=\, X_{3,7}(p)X_{36,73}(p).
$$
Inserting these reductions into the raw formula gives the stated one, where now the only argument of the various basic functions $X_{a,q}$ is $p$ and all involved $q$'s are primes.
\end{proof}

\section{The cases $k \le 8$ revisited} \label{k <= 8}

While explicit formulas for $n(p^k,2)$ with $k \le 6$ and $k=8$ are given in \cite{Eliahou Ramirez}, we provide here new, shorter formulas in terms of the basic functions $X_{a,q}$ for $k \le 8$, including $k=7$. The construction method is similar to the cases $k=9,10$ and relies on the reduction of $\gcd(p^i+1, 2p^{k-i}+1)$ provided by Corollary~\ref{gcd 2}.
\begin{theorem} Let $p$ be an odd prime. Then we have
\begin{eqnarray*}
n(p^1,2)& =& 1+X_{2,3}(p)\\
n(p^2,2)& =& 3\\
n(p^3,2)& =& 1+2X_{2,3}(p)+X_{2,5}(p)\\
n(p^4,2)& =& 4+X_{3,7}(p)\\
n(p^5,2)& =& 1+3X_{2,3}(p)+X_{3,5}(p)+X_{8,17}(p)\\
n(p^6,2)& =& 6+X_{15,31}(p)\\
n(p^7,2)& =& 1+X_{2,3}(p)(3+X_{7,11}(p))+X_{2,5}(p)(1+X_{6,13}(p))+X_{2,17}(p)\\
n(p^8,2)& =& 6+X_{5,7}(p)+X_{23,31}(p)+X_{63,127}(p).
\end{eqnarray*}
\end{theorem}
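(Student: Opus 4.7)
The plan is to apply, for each $k$ from $1$ to $8$, the exact template used in Section~\ref{k=9} for $k=9$. Starting from Proposition~\ref{2g} we have
$$
n(p^k,2)\,=\,\#\{0 \le i \le k \mid \gcd(p^i+1,2p^{k-i}+1)=1\}.
$$
The index $i=0$ always contributes $1$. For each $1 \le i \le k$, I apply Corollary~\ref{gcd 2}: running the Euclidean algorithm on $(i, k-i)$ together with the auxiliary sequences $s_j, t_j$ defined in the proof of Proposition~\ref{prop}, I reduce the gcd to $\gcd\bigl(p^\delta - (-1)^{s_n} 2^{t_n},\, m_k(i)\bigr)$, where $\delta = \gcd(i,k)$ and $m_k(i) = 2^{i/\delta} - (-1)^{k/\delta}$. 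This gives, for every $k$ and $i$, an explicit residue $a \in \Z$ and modulus $c = m_k(i)$ such that the gcd equals $1$ exactly when $p^\delta \not\equiv a \bmod q$ for every prime divisor $q$ of $c$.

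Next, I translate each reduced gcd into a value of $X_{a,q}(p^\delta)$ via Proposition~\ref{X_a,q}, splitting any composite $m_k(i)$ according to its prime factorization. For most $i$ one has $\delta = 1$, so the contribution is simply $X_{a, m_k(i)}(p)$. When $\delta \ge 2$, I invoke Proposition~\ref{p^s}: either the required residue $a$ is not a $\delta$-th power modulo some prime factor $q$ of $m_k(i)$, in which case $X_{a,q}(p^\delta) = 1$ identically and the contribution is trivial; or $a$ is a $\delta$-th power and $X_{a,q}(p^\delta)$ expands as a product of $X$-functions evaluated at $p$. Summing the resulting indicator contributions over $0 \le i \le k$ and factoring out common terms (as with $X_{2,3}(p)$ for $k=7$) produces the compact formulas in the statement.

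In practice the eight verifications are very short because many $m_k(i)$ equal $1$ and many even-$k$ conditions trivialise through non-power residues. For instance, with $k=8$ the moduli $m_8(i)$ for $i = 1,\ldots,8$ are $1, 1, 7, 1, 31, 7, 127, 3$; the three cases with $m_8(i)=1$ force gcd $=1$, the case $i=6$ leads to the condition $p^2 \equiv 3 \bmod 7$ which has no solution, and $i=8$ gives $p^8 \equiv -1 \bmod 3$, which fails since $p^8 \equiv 1 \bmod 3$. Adding in $i=0$, these six indices account for the constant $6$, leaving exactly the three nontrivial terms $X_{5,7}(p), X_{23,31}(p), X_{63,127}(p)$ from $i \in \{3,5,7\}$. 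The main obstacle is purely bookkeeping: faithfully executing each Euclidean algorithm, reading the correct sign and exponent $\rho$ off $(s_n, t_n)$, and performing the $\delta$-th power test in the even-$k$ cases. None of these steps are deep; the theorem is essentially a compact packaging of eight systematic Euclidean-algorithm calculations.
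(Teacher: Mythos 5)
Your proposal is correct and follows essentially the same route as the paper: reduce to counting indices $i$ via Proposition~\ref{2g}, compute each $\gcd(p^i+1,2p^{k-i}+1)$ by the Euclidean-algorithm reduction of Corollary~\ref{gcd 2}, translate into the functions $X_{a,q}$ via Proposition~\ref{X_a,q}, and dispose of the $\delta\ge 2$ cases with Proposition~\ref{p^s} (your $k=8$ worked example matches the paper's table term for term). The paper's proof is exactly this bookkeeping, presented as explicit reduction tables for each $k\le 8$.
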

\begin{proof}
Corollary~\ref{gcd 2} and its proof method yield the following reductions of $\gcd(p^i+1, 2p^{k-i}+1)$ for $i=1, \ldots, k$. The case $i=0$ is omitted, as $\gcd(p^0+1, 2p^{k}+1)=1$ always. A few more arithmetical reductions are also applied. For instance, the equality $\gcd(p^2+1,3)\,=\,1$ below follows from the fact that $-1$ is not a square mod 3. This is one easy case of Proposition~\ref{p^s}.
{\small
\begin{eqnarray*}
k & = & 1:\\
\gcd(p^1+1, 2p^0+1) & = & \gcd(p+1,3)\\ \\
k & = & 2:\\
\gcd(p^1+1, 2p^1+1) & = & \gcd(2p+1,1)\,=\,1\\
\gcd(p^2+1, 2p^0+1) & = & \gcd(p^2+1,3)\,=\,1\\ \\
k & = & 3:\\
\gcd(p^1+1, 2p^2+1) & = & \gcd(p+1,3)\\
\gcd(p^2+1, 2p^1+1) & = & \gcd(2p+1,5)\\
\gcd(p^3+1, 2p^0+1) & = & \gcd(p^3+1,3)\,=\,\gcd(p+1,3)\\ \\
k & = & 4:\\
\gcd(p^1+1, 2p^3+1) & = & \gcd(p+1,1)\,=\,1\\
\gcd(p^2+1, 2p^2+1) & = & \gcd(2p^2+1,1)\,=\,1\\
\gcd(p^3+1, 2p^1+1) & = & \gcd(2p+1,7)\\
\gcd(p^4+1, 2p^0+1) & = & \gcd(p^4+1,3)\,=\,1
\end{eqnarray*}
\begin{eqnarray*}
k & = & 5:\\
\gcd(p^1+1, 2p^4+1) & = & \gcd(p+1,3)\\
\gcd(p^2+1, 2p^3+1) & = & \gcd(2p-1,5)\\
\gcd(p^3+1, 2p^2+1) & = & \gcd(p-2,9)\\
\gcd(p^4+1, 2p^1+1) & = & \gcd(2p+1,17)\\
\gcd(p^5+1, 2p^0+1) & = & \gcd(p^5+1,3)\,=\,\gcd(p+1,3)\\ \\
k & = & 6:\\
\gcd(p^1+1, 2p^5+1) & = & \gcd(p+1,1)\,=\,1\\
\gcd(p^2+1, 2p^4+1) & = & \gcd(p^2+1,3)\,=\,1\\
\gcd(p^3+1, 2p^3+1) & = & \gcd(2p^3+1,1)\,=\,1\\
\gcd(p^4+1, 2p^2+1) & = & \gcd(2p^2+1,5)\,=\,1\\
\gcd(p^5+1, 2p^1+1) & = & \gcd(2p+1,31)\\
\gcd(p^6+1, 2p^0+1) & = & \gcd(p^6+1,3)\,=\,1\\ \\
k & = & 7:\\
\gcd(p^1+1, 2p^6+1) & = & \gcd(p+1,3)\\
\gcd(p^2+1, 2p^5+1) & = & \gcd(2p+1,5)\\
\gcd(p^3+1, 2p^4+1) & = & \gcd(2p-1,9)\\
\gcd(p^4+1, 2p^3+1) & = & \gcd(p-2,17)\\
\gcd(p^5+1, 2p^2+1) & = & \gcd(p+4,33)\\
\gcd(p^6+1, 2p^1+1) & = & \gcd(2p+1,65)\\
\gcd(p^7+1, 2p^0+1) & = & \gcd(p^7+1,3)\,=\,\gcd(p+1,3)\\ \\
k & = & 8:\\
\gcd(p^1+1, 2p^7+1) & = & \gcd(p+1,1)\,=\,1\\
\gcd(p^2+1, 2p^6+1) & = & \gcd(p^2+1,1)\,=\,1\\
\gcd(p^3+1, 2p^5+1) & = & \gcd(p+2,7)\\
\gcd(p^4+1, 2p^4+1) & = & \gcd(2p^4+1,1)\,=\,1\\
\gcd(p^5+1, 2p^3+1) & = & \gcd(4p+1,31)\\
\gcd(p^6+1, 2p^2+1) & = & \gcd(2p^2+1,7)\,=\,1\\
\gcd(p^7+1, 2p^1+1) & = & \gcd(2p+1,127)\\
\gcd(p^8+1, 2p^0+1) & = & \gcd(p^8+1,3)\,=\,1.
\end{eqnarray*}
}

As in the case $k=9$, the claimed formulas follow by reading these tables sequentially and using properties of the functions $X_{a,q}$ from Section~\ref{X_{a,q}}. 
\end{proof}

\bigskip

In particular, these formulas confirm that for $k=1, \ldots, 8$, the value of $n(p^k, 2)$ at an odd prime $p$ is determined by the class of $p$ modulo 3, 1, $3\cdot 5$, 7, $3\cdot 5\cdot 17$, 31, $3\cdot 5\cdot 11\cdot 13\cdot 17$ and $7\cdot 31\cdot 127$, respectively.

\section{A question}

We shall conclude this paper with an open question. On the one hand, we have obtained explicit formulas for $n(p^k,2)$ in all cases $k \le 10$. On the other hand, we know from \cite{Eliahou Ramirez} that no such formula can be expected in the case $k=4097$, at least as long as the prime factors of the 12th Fermat number $2^{2^{12}}+1$ remain unknown. Well then, what happens in the intermediate range $11 \le k \le 4096$? Are there fundamental obstacles which would prevent us to obtain exact formulas for $n(p^k,2)$ all the way up to $k = 4096$?

\bigskip

\noindent
{\small
\textbf{Authors addresses:}

\bigskip
\smallskip

\noindent
$\bullet$ Shalom Eliahou\textsuperscript{a,b,c},

\noindent
\textsuperscript{a}Univ Lille Nord de France, F-59000 Lille, France\\
\textsuperscript{b}ULCO, LMPA J.~Liouville, B.P. 699, F-62228 Calais, France\\
\textsuperscript{c}CNRS, FR 2956, France

\bigskip

\noindent
$\bullet$ Jorge Ram\'\i{}rez Alfons\'{\i}n,

\noindent
Institut de Math\'ematiques et de Mod\'elisation de Montpellier\\
Universit\'e Montpellier 2\\
Case Courrier 051\\
Place Eug\`ene Bataillon\\
34095 Montpellier, France\\
UMR 5149 CNRS
}


\begin{thebibliography}{99}
\bibitem{Bras-Amoros08} \textsc{M.~Bras-Amor\'os}, Fibonacci-like behavior of the number of numerical semigroups of a given genus,  Semigroup Forum  76  (2008) 379--384.
\bibitem{Bras-Amoros09} \textsc{M.~Bras-Amor\'os}, Bounds on the number of numerical semigroups of a given genus, J. Pure and Applied Algebra 213 (2009) 997--1001.
\bibitem{Eliahou Ramirez} \textsc{S.~Eliahou and J.L.~Ram\'\i{}rez Alfons\'\i{}n}, Two-generator numerical semigroups and Fermat and Mersenne numbers, SIAM J. Discrete Math. 25 (2011) 622--630.
\bibitem{Ramirez Alfonsin} \textsc{J.L.~Ram\'\i{}rez Alfons\'\i{}n}, The Diophantine Frobenius problem. Oxford Lecture Series in Mathematics and its Applications 30, Oxford University Press, Oxford, 2005.
\bibitem{Rosales Garcia-Sanchez}  \textsc{J.C.~Rosales and P.A.~Garc\'\i{}a-S\'anchez},  Numerical semigroups. Developments in Mathematics, 20. Springer, New York, 2009. \bibitem{Sylvester} \textsc{J.J.~Sylvester}, On subinvariants, i.e. semi-invariants to binary quantities of an unlimited order, Amer. J. Math. 5 (1882) 119--136.
\end{thebibliography}
\end{document}